\newtheorem{theorem}{Theorem}[section]
\date{}
\newcommand{\R}{\mathbb R}
\newcommand{\N}{\mathbb N}
\renewcommand{\d}{\mathrm d}
\DeclareMathOperator{\rang}{rang}
\newcommand{\bra}[1]{\left( #1 \right)}
\title{\bf Homeomorphism criteria\\for the theory of grid generation}
\author{Marina\,F.\,Prokhorova
\footnote{The work was partially supported by the RFBR grants 09-01-00173-a and 09-01-00139-a (Russia),
by the project of oriented fundamental research of the Ural Branch of Russian Academy of Sciences, 
and by the Program for Basic Research of Mathematical Sciences Branch of Russian Academy of Sciences.
} }
\begin{document}
\maketitle

\begin{abstract}
We give some general criteria of being a homeomorphism for continuous mappings of topological manifolds,
as well as criteria of being a diffeomorphism for smooth mappings of smooth manifolds.
As an illustration, we apply these criteria to the problems arising in two- and three-dimensional grid generation.

\bigskip
\noindent
\textbf{Keywords:} grid generation, homeomorphism, diffeomorphism.
\end{abstract}

\section*{Introduction}

While developing algorithms of grid generation for numerical calculations in
domains of complicated configuration, it is necessary to apply (see, for
example, \cite{Ushakova1}, \cite{Ushakova2}) different criteria for finding out
whether a continuous mapping is a homeomorphism or a smooth mapping is a
diffeomorphism.
Such criteria for bounded domains in~$\R^n$ were suggested
in~\cite{Bobylev1, Bobylev2}. Unfortunately, some theorems in~\cite{Bobylev1,
Bobylev2} are incorrect, while the proofs of some others are incomplete 
(see the last section of \cite{Prokh} for details).

In this paper we formulate and prove some general criteria of being a
homeomorphism for continuous mappings of topological manifolds (Section 1),
as well as criteria of being a diffeomorphism for smooth mappings of smooth manifolds (Section 2).
In Section 3 we give some applications of these criteria to the problems arising in two- and three-dimensional grid generation.

For the convenience of a reader who deals not with topology but with numerical
methods, in the paper we give definitions of main topological notions mentioned
in theorem formulations. It is possible to consider them in more detail, for
example, in~\cite{R-F}, \cite{F-F}, \cite{Dold}.

\section{Topological Manifolds}
\label{sec:top_man}

Let $X$ and $Y$ be topological spaces.
A (continuous) mapping $f\colon X\to Y$ is called an \textit{immersion}
if any point $x \in X$ has a neighborhood $U$ such that $\left.f \right|_{U}$
is a homeomorphism of $U$ onto $f(U)$.

A Hausdorff topological space $X$
is called an \textit{$n$-dimensional topological manifold} if
there exist countable cover of $X$ by open sets each of that
is homeomorphic either to the space $\R ^n$ or to the half-space $\R^n_+ = \{(x_1,\ldots,x_n) \colon x_i \in \R, x_1 \geq 0\}$.

Points $x\in X$ having a neighborhood homeomorphic to $\R^n$ are called
\textit{interior points}. The subspace of $X$ consisting of points that are not
interior is called the \textit{boundary} $\partial X$ of the manifold $X$.
The boundary of an $n$-dimensional manifold is an $(n-1)$-dimensional manifold
without boundary (that is $\partial\partial X = \emptyset$).

We will based on Theorem 8 from \cite{Prokh}.
For the compact manifold $X$ it takes the following form:

\begin{theorem}\label{t1}
Suppose that $X$ and $Y$ are connected topological manifolds of equal dimensions,
$X$ is compact,
$f \colon X \to Y$ is a continuous mapping,
$f(\partial X) \subseteq \partial Y$,
$\left.f \right|_{\partial X}$ and $\left.f \right|_{X \backslash \partial X}$ are immersions.
Then $f$ is a finite-fold covering.
\end{theorem}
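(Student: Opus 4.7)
The plan is to reduce to the standard fact that a proper local homeomorphism onto a connected Hausdorff target is a finite-sheeted covering. Since $X$ is compact and $Y$ is Hausdorff, $f$ is automatically proper, so the entire substance of the proof lies in showing that $f$ is a local homeomorphism at every point of $X$.

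I would first check that $f$ preserves the stratification into interior and boundary. The hypothesis $f(\partial X) \subseteq \partial Y$ is half of this; for the other half, take an interior point $x \in X \setminus \partial X$ and a neighbourhood $U \approx \R^n$ of $x$ on which $f$ is a homeomorphism onto its image. By invariance of domain $f(U)$ is open in $Y$, and as a submanifold it is homeomorphic to $\R^n$, so its manifold boundary $f(U) \cap \partial Y$ is forced to be empty; thus $f(x) \in Y \setminus \partial Y$. Combined with the immersion hypothesis and invariance of domain, this already shows that $f$ is a local homeomorphism at every interior point.

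The main difficulty is verifying that $f$ is a local homeomorphism at a boundary point $x \in \partial X$. I would work in half-space charts around $x$ and $f(x)$, reducing to a continuous $g \colon \R^n_+ \to \R^n_+$ with $g(0)=0$, preserving strata, with $g|_{\R^{n-1}}$ a local homeomorphism at $0$ and $g|_{(\R^n_+)^\circ}$ an immersion. The crucial move is to extend $g$ by reflection to $\tilde g \colon \R^n \to \R^n$, setting $\tilde g(x_1, x') = g(x_1, x')$ for $x_1 \geq 0$ and $\tilde g(x_1, x') = \sigma \, g(-x_1, x')$ for $x_1 \leq 0$, where $\sigma$ reverses the first coordinate; continuity along $\R^{n-1}$ follows because $g$ sends $\R^{n-1}$ into itself. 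The hypotheses translate into: $\tilde g$ preserves the three strata (open upper, open lower, hyperplane) and restricts to a local homeomorphism on each. Any two points of a small ball about $0$ collapsing under $\tilde g$ must therefore lie in the same stratum, and on each stratum the immersion/local-homeomorphism hypotheses (together with the continuous extension at $0$) give local injectivity at $0$. Invariance of domain in $\R^n$ then upgrades local injectivity to the local homeomorphism property for $\tilde g$, and restricting to the upper half recovers the statement for $g$. The delicate point I expect to have to pin down is injectivity of $\tilde g$ on the open strata near $0$: the immersion hypothesis supplies injectivity neighbourhoods at each interior point but does not a priori control their size as one approaches the boundary, so this is precisely where the continuous extension at $0$ and the boundary immersion must be used in tandem.

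Once $f$ is a local homeomorphism, $f(X)$ is open in $Y$ and, by compactness of $X$ with $Y$ Hausdorff, also closed; connectedness of $Y$ then forces $f(X) = Y$. The fibres $f^{-1}(y)$ are compact and discrete, hence finite, and the standard trivialisation---surround the preimages by pairwise disjoint neighbourhoods on which $f$ is a homeomorphism, then shrink the common image by subtracting $f$ of the compact complement---produces an evenly covered neighbourhood of $y$. Hence $f$ is a covering, and connectedness of $Y$ makes its sheet number constant, so it is finite.
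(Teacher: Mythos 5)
There is a genuine gap, and it sits exactly where you flagged it. The outer machinery of your proposal is fine and standard: interior points map to interior points (the paper itself uses this observation in the proof of Theorem \ref{t3}), compactness gives properness, a proper local homeomorphism onto a connected manifold is a finite-fold covering, and the doubling trick correctly converts ``$f$ is injective on a neighbourhood of a boundary point'' into ``$f$ is a local homeomorphism there'' via invariance of domain. But the entire content of Theorem \ref{t1} is concentrated in the one step you leave open: why two \emph{interior} points arbitrarily close to a boundary point $x\in\partial X$ cannot have the same image. Your argument at that point says that $\tilde g$ restricts to a local homeomorphism on each stratum and that this, ``together with the continuous extension at $0$,'' gives local injectivity at $0$; that inference is not valid as stated. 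A local homeomorphism on an open stratum only provides an injectivity neighbourhood around each of \emph{its} points, and these neighbourhoods may shrink to zero size as one approaches the hyperplane, so nothing in the stratum-wise hypotheses plus continuity at $0$ prevents collision pairs $p_k\neq q_k$ of interior points converging to $x$ with $g(p_k)=g(q_k)$. This boundary phenomenon is precisely the delicate point behind the incorrect/incomplete statements of \cite{Bobylev1,Bobylev2} that the introduction alludes to, so it cannot be waved through.

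Note also that the paper does not prove Theorem \ref{t1} directly: it quotes Theorem 8 of \cite{Prokh}, where the boundary analysis is carried out with global input. Indeed, the natural route uses compactness of $X$ essentially: properness plus the interior immersion hypothesis shows $\left.f\right|_{X\setminus\partial X}$ is a proper local homeomorphism onto $Y\setminus\partial Y$, hence a finite covering, and similarly $\left.f\right|_{\partial X}$ covers $\partial Y$; one then matches these two coverings along a collar of the boundary to rule out interior collisions near $\partial X$ and obtain the evenly covered neighbourhoods of points of $\partial Y$. Your reflection argument, by contrast, is purely local and uses neither compactness nor any collar/covering structure, so even if the local statement you are aiming at were true, the proposal as written assumes the crux rather than proving it. To repair it you would need either a genuine proof of local injectivity at boundary points (e.g.\ a degree-theoretic argument exploiting that the boundary trace is locally injective and separates the images of the two open strata) or a switch to the global covering-plus-collar argument of \cite{Prokh}.
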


\noindent
\textbf{Remarks.}
\vspace{-5pt}
\begin{itemize}
\itemsep=-2pt
    \item
    The condition `$\left.f \right|_{\partial X}$ is an immersion' means that
any point $x \in \partial X$ has \textit{a neighborhood $U$ in $\partial X$} such that $\left.f \right|_{U}$
is a homeomorphism of $U$ onto $f(U)$.
So $U$ is open in $\partial X$ but it is \textit{not} open in $X$.
    \item The condition `$\left.f \right|_{\partial X}$ and $\left.f \right|_{X \backslash \partial X}$ are
immersions' is weaker then the condition `$f$ is an immersion'.
    \item Remember that a continuous mapping $f \colon X \to Y$ is called a covering if any point
$y\in Y$ has a neighborhood $V$ such that $f$ maps every connected component of
$f^{-1}(V)$ onto $V$ homeomorphically.
    \item For a covering $f \colon X \to Y$ with connected $Y$ the cardinality of $f^{-1}(y)$ is the same for all points $y\in Y$;
    if it is equal $m$, $m\in \N$, then $f$ is called an $m$-fold covering. 
		In our case $f$ is finite-fold due to compactness of $X$.
\end{itemize}

Theorems \ref{t2}-\ref{t7} are straightforward corollaries of this theorem.

\begin{theorem}\label{t2}
Suppose that $X$, $Y$ and $f \colon X \to Y$ are satisfied to the conditions of Theorem
\ref{t1}, and there exists a point in $Y$ with one-point preimage.
Then $f$ is a homeomorphism of $X$ onto $Y$.
\end{theorem}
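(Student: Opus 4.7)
The plan is to reduce the statement to Theorem \ref{t1} and a trivial observation about $1$-fold coverings. Since the hypotheses of Theorem \ref{t1} are exactly what we are given, the first step is to invoke that theorem directly to conclude that $f \colon X \to Y$ is a finite-fold covering.

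Next, I would use the connectedness of $Y$ (part of the hypotheses of Theorem \ref{t1}, inherited here) together with the fourth remark following Theorem \ref{t1}, which asserts that the cardinality $|f^{-1}(y)|$ is constant across $y \in Y$ for a covering with connected base. Let $m \in \N$ be this common cardinality. The hypothesis of Theorem \ref{t2} furnishes a point $y_0 \in Y$ with $|f^{-1}(y_0)| = 1$, so $m = 1$.

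Finally, I would observe that a $1$-fold covering is automatically a homeomorphism: from $m=1$ it follows that $f$ is a bijection, and the covering property says that every $y \in Y$ has a neighborhood $V$ over which $f$ restricts to a homeomorphism on the unique connected component of $f^{-1}(V)$ containing the preimage of $y$; since the fiber is a singleton, this component is all of $f^{-1}(V)$, so $f$ is a local homeomorphism. A continuous bijection that is a local homeomorphism is a homeomorphism (its inverse is continuous because continuity is a local property, and $f^{-1}$ is continuous on each $V$).

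I do not expect any real obstacle here: the theorem is labelled a ``straightforward corollary'' of Theorem \ref{t1}, and the only content beyond invoking that theorem is the elementary remark that an $m$-fold covering with a singleton fiber has $m=1$ and is therefore a homeomorphism. The main point to be careful about is simply to cite the constancy of fiber cardinality (which relies on $Y$ being connected) rather than to reprove it.
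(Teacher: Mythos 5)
Your proof is correct and is exactly the intended argument: the paper treats Theorem \ref{t2} as an immediate corollary of Theorem \ref{t1}, with the same reasoning (constant fiber cardinality over connected $Y$ forces $m=1$, and a $1$-fold covering is a homeomorphism) that the paper uses explicitly in the proofs of Theorems \ref{t4b} and \ref{t7}. No gaps; your care in justifying why a $1$-fold covering is a homeomorphism is slightly more detailed than the paper, which simply asserts it.
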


\begin{theorem}\label{t3}
Suppose that $X$ and $Y$ are connected topological manifolds of equal dimensions with nonempty boundaries,
$X$ is compact,
$f \colon X \to Y$ is a continuous mapping
that injectively maps $\partial X$ to $\partial Y$,
and $\left.f \right|_{X \backslash\partial X}$ is an immersion.
Then $f$ is a homeomorphism of $X$ onto $Y$.
\end{theorem}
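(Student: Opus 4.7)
The plan is to reduce this to Theorems \ref{t1} and \ref{t2}. The only hypothesis of Theorem \ref{t1} that is not immediately given is that $\left.f\right|_{\partial X}$ is an immersion. To check this, I would use the following observation: $\partial X$ is compact (as a closed subset of the compact $X$), $\partial Y$ is Hausdorff, and $\left.f\right|_{\partial X}\colon \partial X \to \partial Y$ is continuous and injective. For any $x \in \partial X$, picking a compact neighborhood $U$ of $x$ in $\partial X$, the restriction $\left.f\right|_U$ is a continuous injection from a compact space into a Hausdorff space, hence a homeomorphism onto its image $f(U)$. This gives the immersion property on $\partial X$ (note: the definition of immersion in this paper only requires a local homeomorphism onto the image, so invariance of domain is not actually needed here).

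With this in hand, Theorem \ref{t1} applies and $f$ is a finite-fold covering, say $m$-fold. Next, I would show that $f^{-1}(\partial Y) = \partial X$. The inclusion $f(\partial X)\subseteq \partial Y$ is given. Conversely, if $x \in X\setminus\partial X$ were a point with $f(x)\in\partial Y$, then since $f$ is a covering, $f$ would map some open neighborhood of $x$ (homeomorphic to $\R^n$) homeomorphically onto a neighborhood of $f(x)$, forcing $f(x)$ to be an interior point of $Y$—a contradiction. Hence $f^{-1}(\partial Y)\subseteq \partial X$, and combined with the first inclusion we get equality.

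Now pick any $y\in\partial Y$ (nonempty by hypothesis). Since $f$ is a covering onto the connected space $Y$, it is surjective, so $f^{-1}(y)$ is nonempty and contained in $\partial X$. But $\left.f\right|_{\partial X}$ is injective, so $f^{-1}(y)$ consists of exactly one point. Thus $Y$ contains a point with one-point preimage, and Theorem \ref{t2} concludes that $f$ is a homeomorphism of $X$ onto $Y$.

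The only subtle step is the verification that $\left.f\right|_{\partial X}$ is an immersion in the sense defined in the paper; everything else is a clean bookkeeping of preimages of boundary points against the covering property supplied by Theorem \ref{t1}. I do not expect any genuine obstacle once Theorem \ref{t1} is invoked.
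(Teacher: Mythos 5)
Your proposal is correct and follows essentially the same route as the paper: verify that $\left.f\right|_{\partial X}$ is an immersion via the compact-to-Hausdorff argument, observe that interior points cannot map to $\partial Y$ so a boundary point of $Y$ has a one-point preimage by injectivity, and conclude via Theorems \ref{t1} and \ref{t2}. The only cosmetic difference is that you derive ``interior maps to interior'' from the covering property after invoking Theorem \ref{t1}, whereas the paper argues it directly from the immersion hypothesis; both are fine.
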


\begin{proof}
$\left.f \right|_{\partial X}$ considering as a mapping from $\partial X$ to $f(\partial
X)$ is a continuous bijective mapping of compact topological spaces,
so it is a homeomorphism.
For any internal point $x$ of $X$ its image lie in some subset of $Y$
homeomorphic to $\R ^n$ so $f(x)$ cannot lie in $\partial Y$.
Thus any point $y \in f(\partial X)$ has one-point preimage,
$\left.f \right|_{X \backslash\partial X}$ and $\left.f \right|_{\partial
X}$ are immersions,
and $f$ is a homeomorphism of $X$ onto $Y$ by Theorem \ref{t1}.
\end{proof}

\begin{theorem}\label{t4}
Suppose that $X$, $Y$ and $f \colon X \to Y$ are satisfied to the
conditions of Theorem \ref{t1}, and there is no proper subgroup of
finite index in $\pi_1(Y)$ isomorphic to $\pi_1(X)$. Then $f$ is a
homeomorphism of $X$ onto $Y$.
\end{theorem}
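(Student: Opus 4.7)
The plan is to reduce Theorem \ref{t4} immediately to Theorem \ref{t1} together with standard covering space theory. By Theorem \ref{t1}, the hypotheses guarantee that $f\colon X\to Y$ is an $m$-fold covering for some finite $m\ge 1$; the entire task is then to force $m=1$ using the hypothesis on $\pi_1(Y)$, and to observe that a $1$-fold covering of connected spaces is a homeomorphism.

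Next I would invoke the basic facts from covering space theory applied to this covering, using that topological manifolds (possibly with boundary) are path-connected and locally path-connected once they are connected. First, the induced homomorphism $f_*\colon \pi_1(X)\to\pi_1(Y)$ is injective (because a nullhomotopy of $f\circ\gamma$ in $Y$ lifts uniquely to a nullhomotopy of $\gamma$ in $X$). Second, the image $H:=f_*\pi_1(X)\le\pi_1(Y)$ has index equal to the number of sheets, that is $[\pi_1(Y):H]=m$. Thus $H$ is a finite-index subgroup of $\pi_1(Y)$ isomorphic to $\pi_1(X)$.

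Now I would apply the hypothesis: since there is no proper subgroup of finite index in $\pi_1(Y)$ isomorphic to $\pi_1(X)$, the subgroup $H$ cannot be proper, so $H=\pi_1(Y)$ and consequently $m=[\pi_1(Y):H]=1$. A one-fold covering of connected spaces is a homeomorphism, so $f$ is a homeomorphism of $X$ onto $Y$.

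I do not anticipate a real obstacle: the only point that warrants a sentence of care is the legitimacy of the covering-space dictionary for manifolds with (possibly nonempty) boundary, which is fine because the local path connectedness and path connectedness needed for the dictionary are automatic for connected topological manifolds; base points can be chosen freely since changing them conjugates $H$ in $\pi_1(Y)$ and does not affect either the index or the isomorphism type.
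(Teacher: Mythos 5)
Your proposal is correct and follows essentially the same route as the paper: apply Theorem \ref{t1} to get a finite-fold covering, use injectivity of $f_*$ and the fact that the index of $f_*\pi_1(X)$ in $\pi_1(Y)$ equals the number of sheets, and conclude from the hypothesis that the covering is one-fold, hence a homeomorphism. Your version merely spells out the sheet-counting and base-point details that the paper leaves implicit.
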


\noindent
\textbf{Remarks.}
\vspace{-5pt}
\begin{itemize}
\itemsep=-2pt
    \item If $Y$ is simply connected then
$\pi_1(Y)={1}$ and has no proper subgroups so the last condition of
the theorem is fulfilled.
    \item Remember that the fundamental group $\pi_1(X)$ of a topological space $X$
is the group whose elements are equivalence classes of the loops
in $X$ with some fixed based point; two loops are considered
equivalent if there is a homotopy from one loop to another. 
A space is said to be simply connected if its fundamental group is
trivial (so any two loops are homotopic). 
The proper subgroup is a subgroup which is not coincide with the whole group. 
The index of a subgroup $H$ in a group $G$ is the number of cosets of $H$ in $G$ (in
particular, proper subgroup is a subgroup of index greater then
1).
\end{itemize}

\begin{proof}
By Theorem \ref{t1}, $f$ is a covering so it induces injective homomorphism $\pi_1(X) \to \pi_1(Y)$. 
This covering is finite-fold so the image of $\pi_1(X)$ in
$\pi_1(Y)$ is the subgroup of finite index. If no proper subgroup
of finite index in $\pi_1(Y)$ is isomorphic to $\pi_1(X)$ then
this homomorphism is an isomorphism. So $f$ is a homeomorphism of
$X$ onto $Y$.
\end{proof}

\begin{theorem}\label{t4b}
Suppose that $X$, $Y$ and $f \colon X \to Y$ are satisfied to the
conditions of Theorem \ref{t1}.
Let the Euler characteristics of $X$, $Y$ are equal and non-zero:
$\chi(X)=\chi(Y)\neq 0$. 
Then $f$ is a homeomorphism of $X$ onto $Y$.
\end{theorem}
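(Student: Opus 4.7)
The plan is to reduce the statement to Theorem \ref{t1} and then exploit the multiplicativity of the Euler characteristic under finite coverings. By Theorem \ref{t1}, the hypotheses immediately give that $f\colon X\to Y$ is a finite-fold covering, say an $m$-fold covering for some $m\in\N$. Since coverings are surjective onto connected bases, $Y=f(X)$ is the continuous image of a compact space and hence compact; in particular $\chi(Y)$ is well-defined as an ordinary (e.g.\ singular or CW) Euler characteristic.

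The key fact I would invoke is that for an $m$-fold covering $f\colon X\to Y$ of compact spaces (or more generally of spaces admitting finite CW structures) one has
\[
\chi(X) \;=\; m\cdot\chi(Y).
\]
This is standard: one can lift a finite CW (or triangulation) structure from $Y$ to $X$ so that each open $k$-cell of $Y$ has exactly $m$ preimages, each a $k$-cell of $X$; summing the alternating cell counts gives the stated multiplicativity. Both $X$ and the compact manifold $Y$ admit such finite CW structures, so the formula applies.

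Combining this with the hypothesis $\chi(X)=\chi(Y)\neq 0$ forces $m=1$. A $1$-fold covering is by definition a homeomorphism onto its image; since a covering is surjective, $f$ is a homeomorphism $X\to Y$.

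The only subtle point is the justification of $\chi(X)=m\chi(Y)$ for topological manifolds (rather than smooth or PL ones), so the main thing I would want to cite carefully is either a triangulability/CW result for compact topological manifolds or, alternatively, the more elementary cellular/simplicial lifting argument applied after choosing any finite CW decomposition of $Y$ whose cells are evenly covered. Given the set-up of the paper (which freely uses $\pi_1$ and coverings in the previous theorem), invoking this multiplicativity seems entirely within the expected toolbox, so no real obstacle is expected.
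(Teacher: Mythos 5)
Your proposal is correct and follows exactly the paper's own argument: apply Theorem \ref{t1} to obtain an $m$-fold covering, use the multiplicativity $\chi(X)=m\chi(Y)$, and conclude $m=1$ from $\chi(X)=\chi(Y)\neq 0$. The extra care you take in justifying the multiplicativity for compact topological manifolds is a reasonable addition, but the route is the same as in the paper.
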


Remember that the Euler characteristic of a topological space $X$ is the alternating sum of its Betti numbers 
$\chi(X) = \sum_{k=0}^{\infty}{(-1)^k b_k(X)}$ in the case when this sum is well-defined. 
If $X$ is a compact topological manifold then this sum is well-defined and equal to the alternating sum of the numbers of $k$-dimensional cells in a cell decomposition of $X$. 

\begin{proof}
By Theorem \ref{t1}, $f$ is an $m$-fold covering for some natural number $m$.
Therefore $\chi(X) = m \chi(Y)$,
and taking account of the condition of the Theorem we obtain $m=1$.
Thus $f$ is an $1$-fold covering that is a homeomorphism of $X$ onto $Y$.
\end{proof}

\begin{theorem}\label{t5}
Suppose that $X$, $Y$ and $f \colon X \to Y$ are satisfied to the
conditions of Theorem \ref{t1}, and $X$, $Y$ have nonempty
boundaries with the same number of the connected components:
$\partial X = \amalg_{i=1}^k A_i$, $\partial Y = \amalg_{i=1}^{k}
B_i$. Suppose that there exist $j$ such that $f(A_j) \subseteq
B_j$ and there is no proper subgroup of finite index in
$\pi_1(B_j)$ isomorphic to $\pi_1(A_j)$. Then $f$ is a
homeomorphism of $X$ onto $Y$.
\end{theorem}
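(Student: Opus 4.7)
The plan is to leverage Theorem \ref{t1} to obtain that $f$ is an $m$-fold covering for some $m \in \N$, then restrict to the boundary and use the matching count of boundary components together with the fundamental-group hypothesis to force $m=1$.

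First, as in the proof of Theorem \ref{t3}, I would observe that $f^{-1}(\partial Y) = \partial X$: any interior point of $X$ has a neighborhood sent homeomorphically into an open subset of $Y$ homeomorphic to $\R^n$, which cannot meet $\partial Y$. Combined with the covering property of $f$, this implies that $\left.f\right|_{\partial X} \colon \partial X \to \partial Y$ is a covering of the same degree $m$: if $V \subset Y$ is an evenly covered neighborhood of a point $y \in \partial Y$ with sheets $U_\alpha \xrightarrow{\sim} V$, then $V \cap \partial Y$ is evenly covered by the restrictions $U_\alpha \cap \partial X \xrightarrow{\sim} V \cap \partial Y$, since a homeomorphism of manifolds with boundary necessarily sends boundary to boundary.

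Next, I would count components. For each $i$, let $c_i$ denote the number of components of $\partial X$ mapped into $B_i$. Because $\left.f\right|_{\partial X}$ is a surjective covering, each $B_i$ has at least one preimage component, so $c_i \geq 1$. Summing gives $\sum_i c_i = k$, which combined with $\sum_i 1 = k$ forces $c_i = 1$ for every $i$. Since $f(A_j) \subseteq B_j$ by hypothesis, the component $A_j$ is the \emph{unique} component of $\partial X$ lying over $B_j$; hence $\left.f\right|_{A_j} \colon A_j \to B_j$ carries the entire degree, i.e.\ it is itself an $m$-fold covering.

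Finally, this covering induces an injective homomorphism $\pi_1(A_j) \hookrightarrow \pi_1(B_j)$ whose image has index $m$. The hypothesis that $\pi_1(B_j)$ has no proper subgroup of finite index isomorphic to $\pi_1(A_j)$ then forces $m=1$, so $f$ is a $1$-fold covering and thus a homeomorphism of $X$ onto $Y$. The main step, which relies essentially on the assumption that $\partial X$ and $\partial Y$ have the same number of components, is the component-counting argument; verifying that the boundary restriction is a covering of the same degree is a routine local check.
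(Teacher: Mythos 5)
Your proof is correct and follows essentially the same route as the paper: establish $f^{-1}(\partial Y)=\partial X$, use connectedness of the $A_i$ plus surjectivity and the equal component count to conclude $f^{-1}(B_j)=A_j$, so $\left.f\right|_{A_j}\colon A_j\to B_j$ is an $m$-fold covering, and then the fundamental-group hypothesis forces $m=1$. You merely spell out two steps the paper leaves implicit (that the boundary restriction is a covering of the same degree, and the injectivity and finite index of the induced map on $\pi_1$), which is fine.
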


\begin{proof}
For any internal point $x$ of $X$ its image lie in some subset of $Y$
homeomorphic to $\R ^n$ so $f(x)$ cannot lie in $\partial Y$.
Therefore $f^{-1}(\partial Y) = \partial X$.
For any $i$ $A_i$ is connected so $f(A_i)$ is contained in some $B_{i'}$.
From this and the surjectivity of $f$ we obtain that
the preimage of every connected component of $\partial Y$ is the one
connected component of $\partial X$.
In particular, $f^{-1}(B_j) = A_j$.

By Theorem \ref{t1}, $f$ is an $m$-fold covering, so $\left.f \right|_{A_j}$
considered as a mapping from $A_j$ to $B_j$ is also an $m$-fold covering. 
If no proper subgroup of finite index in $\pi_1(B_j)$ is
isomorphic to $\pi_1(A_j)$ then $\left.f \right|_{A_j}$ is a
homeomorphism of $A_j$ onto $B_j$, $m=1$, and $f$ is a homeomorphism of $X$ onto $Y$.
\end{proof}

The following result is the immediate consequence of Theorem \ref{t5}:

\begin{theorem}\label{t6}
Suppose that $X$, $Y$ and $f \colon X \to Y$ are satisfied the conditions of Theorem
\ref{t1},
$X$, $Y$ have nonempty boundaries with the same number of the connected
components,
and at least one of the connected components of $\partial Y$ is simply
connected.
Then $f$ is a homeomorphism of $X$ onto $Y$.
\end{theorem}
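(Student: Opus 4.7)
The plan is to reduce directly to Theorem \ref{t5}. Since the enumeration of the components of $\partial X$ and $\partial Y$ in the statement of Theorem \ref{t5} is arbitrary, I can relabel so that the distinguished simply connected component of $\partial Y$ plays the role of $B_j$.

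First, I would invoke Theorem \ref{t1} to conclude that $f$ is a finite-fold covering. Next, I would repeat the opening steps from the proof of Theorem \ref{t5}: any interior point of $X$ is sent into $Y\backslash\partial Y$, because its image lies in a subset homeomorphic to $\R^n$; hence $f^{-1}(\partial Y)=\partial X$. Since each connected component $A_i$ of $\partial X$ is connected, it must be sent into some single component $B_{i'}$ of $\partial Y$. Using surjectivity of $f$ (which holds because a covering over a connected space is surjective) together with the assumption that $\partial X$ and $\partial Y$ have the same number of connected components, one obtains that the assignment $i \mapsto i'$ is a bijection, and that $f^{-1}(B_{i'})=A_i$.

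Let $B_j$ be a simply connected component of $\partial Y$ (which exists by hypothesis), and let $A_j$ be the unique component of $\partial X$ with $f(A_j)\subseteq B_j$. Since $\pi_1(B_j)$ is trivial, it has no proper subgroups at all, so in particular no proper subgroup of finite index isomorphic to $\pi_1(A_j)$. All hypotheses of Theorem \ref{t5} are therefore satisfied, and the conclusion follows immediately.

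I do not anticipate a serious obstacle; the only point requiring a modicum of care is the book-keeping that identifies a component $A_j$ of $\partial X$ mapping into the prescribed simply connected $B_j$, after which the triviality of $\pi_1(B_j)$ makes the subgroup hypothesis of Theorem \ref{t5} vacuous.
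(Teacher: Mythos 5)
Your proposal is correct and follows exactly the paper's route: the paper states Theorem \ref{t6} as an immediate consequence of Theorem \ref{t5}, obtained by taking $B_j$ to be the simply connected boundary component, so that $\pi_1(B_j)$ is trivial and the subgroup condition is vacuous. Your extra book-keeping (showing via surjectivity and $f^{-1}(\partial Y)=\partial X$ that some component $A_j$ maps into $B_j$) is a harmless elaboration of the same argument.
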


\begin{theorem}\label{t7}
Suppose that $X$, $Y$ are compact connected topological manifolds,
their boundaries are nonempty and have the same number of the connected
components:
$\partial X = \amalg_{i=1}^k A_i$,
$\partial Y = \amalg_{i=1}^{k} B_i$,
where the components are numerated in the order of non-increasing of
their Euler characteristics:
$\chi(A_1)\geq\ldots\geq\chi(A_k)$,
$\chi(B_1)\geq\ldots\geq\chi(B_k)$.
Suppose that
\begin{equation}\label{eq_chi}
\mbox{there is no natural } m\geq 2 \mbox{ such that }
\chi(A_i) = m \chi(B_i) \mbox{ for all } i=1\ldots k.
\end{equation}
At this conditions, if $f \colon X \to Y$ is a continuous mapping such that
$f(\partial X) \subseteq \partial Y$ and
$\left.f \right|_{\partial X}$, $\left.f \right|_{X \backslash \partial X}$ are
immersions,
then $f$ is a homeomorphism of $X$ onto $Y$.
\end{theorem}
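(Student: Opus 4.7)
The plan is to leverage Theorem~\ref{t1} exactly as in Theorems~\ref{t4b} and~\ref{t5}, but use the Euler characteristics of individual boundary components together with the prescribed ordering to force the covering degree to be $1$.

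First I would invoke Theorem~\ref{t1} to conclude that $f$ is an $m$-fold covering for some $m\in\N$. Using invariance of domain as in the proof of Theorem~\ref{t5}, the interior of $X$ cannot map to $\partial Y$, so $f^{-1}(\partial Y)=\partial X$. Each $A_i$ is connected, hence $f(A_i)$ lies in a single component $B_{\sigma(i)}$ of $\partial Y$, defining a map $\sigma\colon\{1,\dots,k\}\to\{1,\dots,k\}$. Surjectivity of $f$ (which follows from it being a covering of the connected space $Y$) forces $\sigma$ to be surjective, hence bijective. Consequently $f^{-1}(B_{\sigma(i)})=A_i$, and the standard argument that restriction of a covering to the preimage of the boundary is again a covering shows that $f\bigl|_{A_i}\colon A_i\to B_{\sigma(i)}$ is an $m$-fold covering. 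Multiplicativity of the Euler characteristic under finite-sheeted coverings of compact manifolds gives
\begin{equation*}
\chi(A_i)=m\,\chi(B_{\sigma(i)})\qquad\text{for all }i=1,\dots,k.
\end{equation*}

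The key combinatorial step is to show $\sigma$ is compatible with the ordering, i.e.\ $\chi(B_{\sigma(i)})=\chi(B_i)$ for every $i$. Since $m\geq 1$ is positive, multiplication by $m$ preserves the order on $\R$, so the sequence $\bigl(\chi(B_{\sigma(i)})\bigr)_{i=1}^{k}=\bigl(m^{-1}\chi(A_i)\bigr)_{i=1}^{k}$ is non-increasing. But this sequence is a permutation of the non-increasing sequence $\bigl(\chi(B_j)\bigr)_{j=1}^{k}$; two non-increasing sequences with the same multiset of values coincide termwise, so $\chi(B_{\sigma(i)})=\chi(B_i)$ for all $i$. Therefore $\chi(A_i)=m\,\chi(B_i)$ for every $i$.

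Now condition \eqref{eq_chi} rules out $m\geq 2$, so $m=1$ and $f$ is a $1$-fold covering, i.e.\ a homeomorphism. The only genuinely non-routine step in this argument is the ordering comparison in the second paragraph; the rest is a straightforward assembly of the covering-theoretic facts already used in the proofs of Theorems~\ref{t3}, \ref{t4b} and~\ref{t5}.
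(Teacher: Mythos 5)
Your proposal is correct and follows essentially the same route as the paper's proof: invoke Theorem~\ref{t1} to get an $m$-fold covering, restrict to boundary components as in Theorem~\ref{t5}, use multiplicativity of $\chi$ under finite coverings, and let condition~(\ref{eq_chi}) force $m=1$. Your handling of the ordering step is in fact slightly more careful than the paper's (which asserts $f(A_i)=B_i$ outright, a claim that can fail when Euler characteristics are tied, whereas you only deduce the needed termwise equality $\chi(B_{\sigma(i)})=\chi(B_i)$), but the argument is the same in substance.
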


\smallskip

\begin{proof}
By Theorem \ref{t1}, $f$ is an $m$-fold covering for some natural number $m$.
As in the proof of Theorem \ref{t5}, $f^{-1}(\partial Y) = \partial X$,
and $\left.f \right|_{A_i}$ considered as a mapping from $A_i$ to $f \bra{A_i}$ is also an $m$-fold covering.
Therefore $\chi\bra{A_i} = m \chi \bra {f \bra{A_i}}$,
and $f(A_i)=B_i$ due to decreasing order of
$\chi\bra{A_i}$ and $\chi \bra {B_i}$.
So $\chi(A_i) = m \chi(B_i)$,
and taking account of the condition of the Theorem we obtain $m=1$.
Thus $f$ is an $1$-fold covering that is a homeomorphism of $X$ onto $Y$.
\end{proof}

In the previous theorems, it was required that the boundary of $X$ be mapped to the boundary
of $Y$.
Theorem 10 from \cite{Prokh} is free of this condition;
for the compact manifold $X$ it takes the following form:

\begin{theorem}\label{ti}
Suppose that $X$ and $Y$ are connected topological manifolds of equal dimensions,
$X$ is compact, $Y$ is simply connected,
$f \colon X \to Y$ is an immersion,
and the restriction of $f$ to every connected component of $\partial X$ is an injection.
Then $f$ is a homeomorphism of $X$ onto $f(X)$.
\end{theorem}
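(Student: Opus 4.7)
My plan has two parts: first, I would handle the boundary-free case directly; second, I would reduce the general case to the boundary-free one by extending $f$ across $\partial X$ to an immersion of a larger manifold without boundary.

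\emph{Boundary-free case.} If $\partial X=\emptyset$, then $f$ is a local homeomorphism between connected $n$-manifolds with $X$ compact. By invariance of domain $f$ is an open map, so $f(X)$ is open in $Y$; by compactness of $X$ and Hausdorffness of $Y$ it is also closed; connectedness of $Y$ then forces $f(X)=Y$. Hence $f$ is a proper local homeomorphism onto $Y$, that is, a finite covering, and the simple-connectedness of $Y$ forces it to be one-sheeted. Thus $f$ is a homeomorphism onto $Y=f(X)$.

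\emph{General case.} For $\partial X\neq\emptyset$ I would construct a connected $n$-manifold $\widetilde X$ without boundary that contains $X$ as a compact subset with $\partial X$ in the interior of $\widetilde X$, together with an immersion $\tilde f\colon\widetilde X\to Y$ extending $f$. The construction attaches an external collar to each component $C\subseteq\partial X$: each $x\in C$ has a chart $U\cong\R^n_+$ on which $f|_U$ is a homeomorphism onto a half-space $f(U)\subset Y$, and the immersion structure lets this be extended to a full $\R^n$-chart in $Y$ via a local bi-collar of $f(U)\cap f(C)$. The injectivity of $f|_C$ permits these local extensions to glue consistently along $C$. With $\widetilde X$ arranged to be compact and boundaryless (for instance by further doubling across the new outer boundary of the thickened collar), the boundary-free case applies to $\tilde f$ and yields that $\tilde f$ is injective; restricting to $X\subset\widetilde X$ shows that $f$ is a continuous injection from a compact space to a Hausdorff one, hence a homeomorphism onto $f(X)$.

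\emph{Main obstacle.} The critical step is the global consistency of the outward extension when different components of $\partial X$ have overlapping images in $Y$: the ``other side'' into which each component is extended must be chosen coherently so that $\tilde f$ remains an immersion and $\widetilde X$ is a genuine manifold. I would handle this by exploiting the orientability of $Y$ implied by its simple-connectedness---the pulled-back orientation of $X$ fixes a canonical outward normal direction at each boundary point, hence a canonical side of $f(C)$ in $Y$ into which to extend---combined with the per-component injectivity of $f|_C$ to enforce compatibility at points where multiple components of $\partial X$ share an image.
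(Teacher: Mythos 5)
Your boundary-free case is fine, but the reduction in the general case contains a fatal gap. If a compact manifold $\widetilde X$ without boundary admitted an immersion $\tilde f\colon\widetilde X\to Y$, then your own first argument would apply to $\tilde f$: its image would be open and closed, hence all of $Y$, forcing $Y$ to be compact. But the theorem is stated (and is used in Section 3 of the paper) precisely for targets such as $Y=\R^n$, where a compact $n$-manifold $X$ with boundary is immersed into $\R^n$. So in the main case of interest no compact boundaryless extension $(\widetilde X,\tilde f)$ can exist, and the step that is supposed to produce it cannot work. Concretely, the suggested ``doubling across the new outer boundary of the thickened collar'' does not give an immersion: the natural map on the double is a fold along the outer boundary, and a fold is not a local homeomorphism there; nor is there any way to cap off the outer boundary inside $Y$. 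The strategy of reducing to the closed case is therefore unworkable in principle, not just incompletely executed.

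There is a second, independent problem with the local step. The theorem is about topological manifolds, and the immersion hypothesis at a boundary point only gives a homeomorphism of a half-ball $U\cong\R^n_+$ onto its image $f(U)\subseteq Y$; it gives no control over how $f(U)$ sits in $Y$. The image of the flat face of $U$ is a topological $(n-1)$-disk in $Y$ which need not be locally flat, and in the topological category such a disk need not have a bi-collar (wild embeddings, e.g.\ of Alexander horned type in dimension $3$), so ``extend to a full $\R^n$-chart in $Y$ via a local bi-collar of $f(U)\cap f(C)$'' is not justified. Note also that the paper itself does not prove this statement by extending $f$ across $\partial X$; it quotes it as a special case of Theorem 10 of \cite{Prokh}, whose argument works directly with the immersion on $X$ rather than with any completion of $X$ to a closed manifold. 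To repair your approach you would need an argument that never leaves $X$, e.g.\ a covering-space or degree-type analysis of $f$ over $Y\setminus f(\partial X)$, rather than an extension across the boundary.
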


\section{Smooth Manifolds}

Let $X$ be an $n$-dimensional topological manifold.
A {\it chart} of $X$ is a homeomorphism $\varphi$ of an open domain $U \subset X$
to $\R^n$ or to $\R^n_+$. Charts $\varphi \colon U \to V$ and $\varphi' \colon
U' \to V'$ are called $C^r$-compatible if the mapping $\varphi' \circ
\varphi^{-1}$ and the inverse mapping are $C^r$-smooth in their domains. A {\it
$C^r$-structure} on $X$ is a cover of $X$ by pairwise $C^r$-compatible charts.
Two $C^r$-structures on $X$ are considered equivalent if their union is also a
$C^r$-structure.

A {\it $C^r$-manifold} (a manifold of smoothness class $C^r$) is an
$n$-dimensional topological manifold $X$ with an equivalence class of $C^r$-structures on it.

A continuous mapping $f\colon X \to X'$, where $X$ and $X'$ are $C^r$-manifolds, is called
a smooth mapping of class $C^r$, or {\it $C^r$-mapping}, if for any pair of
charts $\varphi\colon U\to V$ and $\varphi'\colon U'\to V'$ ($U \subset X$,
$U'\subset X'$) the mapping $\varphi'\circ f\circ\varphi^{-1}$ is $C^r$-smooth
in its domain.

A {\it $C^r$-diffeomorphism} is a bijective $C^r$-mapping such that its inverse is also smooth mapping of class $C^r$.

The results of the previous section can be easily reformulated for the
smooth case. Let us present the smooth variants of them.
Everywhere below, $r\geq 1$.

\begin{theorem}\label{ts1}
Suppose that $X$ and $Y$ are connected $C^r$-manifolds of equal dimensions,
$X$ is compact,
$f \colon X \to Y$ is a $C^r$-mapping, whose differential $\d f$ is nondegenerate (that is $\rang (\d f) = \dim X$) on $X$,
and $f(\partial X) \subseteq \partial Y$.
Then $f$ is a smooth covering.
\end{theorem}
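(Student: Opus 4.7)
The plan is to reduce the smooth statement to its topological counterpart (Theorem \ref{t1}) and then upgrade the resulting topological covering to a smooth one using the inverse function theorem.

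First I would check the hypotheses of Theorem \ref{t1}. Since $f$ is $C^r$ it is certainly continuous, and $f(\partial X)\subseteq \partial Y$ is given. What remains is to verify that $\left.f\right|_{X\setminus \partial X}$ and $\left.f\right|_{\partial X}$ are immersions in the topological sense. For any interior point $x\in X\setminus\partial X$, the differential $\d f_x$ is a linear isomorphism $T_xX\to T_{f(x)}Y$, so by the inverse function theorem $f$ is a local $C^r$-diffeomorphism near $x$; in particular it is a local homeomorphism, so $\left.f\right|_{X\setminus\partial X}$ is an immersion.

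The slightly more delicate point is the boundary. Here I would use the hypothesis $f(\partial X)\subseteq \partial Y$ to argue that for any boundary point $x\in \partial X$ the differential $\d f_x$ maps the subspace $T_x(\partial X)\subset T_xX$ into $T_{f(x)}(\partial Y)\subset T_{f(x)}Y$: choose any smooth curve in $\partial X$ through $x$, its image lies in $\partial Y$, so differentiating at $x$ yields a vector in $T_{f(x)}(\partial Y)$. Since $\d f_x$ is injective and both $T_x(\partial X)$ and $T_{f(x)}(\partial Y)$ have dimension $n-1$, the restricted map $\d(f|_{\partial X})_x\colon T_x(\partial X)\to T_{f(x)}(\partial Y)$ is a linear isomorphism. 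Applying the inverse function theorem to $f|_{\partial X}\colon \partial X\to \partial Y$ shows that it, too, is a local $C^r$-diffeomorphism, hence a topological immersion.

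With both restrictions being immersions, Theorem \ref{t1} applies and yields that $f$ is a finite-fold (topological) covering. To promote this to a smooth covering, fix $y\in Y$ and a neighborhood $V$ as in the covering definition; on each connected component of $f^{-1}(V)$ the map $f$ is a homeomorphism onto $V$, but by construction $\d f$ is nondegenerate at every point, so the inverse function theorem shows that this homeomorphism is actually a $C^r$-diffeomorphism (both on interior charts and on boundary charts by the previous paragraph). Hence $f$ is a smooth covering.

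I expect the only genuine subtlety to be the second step: one has to know that the rank condition on $\d f$ on the whole of $X$, combined with $f(\partial X)\subseteq \partial Y$, forces $\d f$ to carry the boundary tangent space into the boundary tangent space, so that $f|_{\partial X}$ is itself a smooth immersion of $(n-1)$-manifolds. Everything else is a direct invocation of Theorem \ref{t1} and the inverse function theorem.
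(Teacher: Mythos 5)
Your proposal is correct and follows exactly the route the paper intends: the smooth theorems of Section 2 are stated as reformulations of the Section 1 results, and your reduction to Theorem \ref{t1} via the inverse function theorem (checking that $\left.f\right|_{X\setminus\partial X}$ and $\left.f\right|_{\partial X}$ are immersions, the latter because $\d f_x$ carries $T_x(\partial X)$ isomorphically onto $T_{f(x)}(\partial Y)$) is the intended argument. The only detail to tighten is the last step: $C^r$-smoothness of the local inverse of $f$ at a boundary point in all $n$ directions does not follow from the previous paragraph about $\left.f\right|_{\partial X}$ (which only controls the $(n-1)$ boundary directions), but is obtained by the standard device of extending the chart representation of $f$ across the boundary hyperplane to a $C^r$ map on an open set and applying the ordinary inverse function theorem to the extension, whose local inverse restricts to that of $f$.
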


\begin{theorem}\label{ts2}
Suppose that $X$, $Y$ and $f \colon X \to Y$ are satisfied to the conditions of Theorem
\ref{ts1}, and there exists a point in $Y$ with one-point preimage.
Then $f$ is a $C^r$-diffeomorphism of $X$ onto $Y$.
\end{theorem}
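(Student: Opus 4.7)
The plan is almost immediate once Theorem~\ref{ts1} is in hand. First I invoke Theorem~\ref{ts1}: the hypotheses of Theorem~\ref{ts2} already include everything Theorem~\ref{ts1} requires, so $f \colon X \to Y$ is a smooth covering. Since $Y$ is connected, the fiber cardinality of $f$ is a well-defined constant $m \in \N$; this constant is finite because $X$ is compact, so $f$ is an $m$-fold covering.

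Next I exploit the hypothesis about a one-point preimage: there exists some $y_{0} \in Y$ with $\#\,f^{-1}(y_{0})=1$, and, the fiber cardinality being constant, this forces $m=1$.

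Finally I translate $m=1$ into a diffeomorphism statement. By definition of a smooth covering, each $y \in Y$ has a neighborhood $V$ such that $f$ maps every connected component of $f^{-1}(V)$ onto $V$ as a $C^{r}$-diffeomorphism. With $m=1$ the fiber over every $y$ is a single point, so $f$ is globally bijective, and around each point of $X$ it is a $C^{r}$-diffeomorphism onto an open subset of $Y$. Consequently $f^{-1}$ is defined globally, continuous, and $C^{r}$-smooth in a neighborhood of every point of $Y$, which means $f$ is a $C^{r}$-diffeomorphism of $X$ onto $Y$.

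There is no real obstacle here: all the genuine work is inside Theorem~\ref{ts1}, which produces the covering structure together with its local smooth trivializations. Theorem~\ref{ts2} is then a straightforward bookkeeping corollary, exactly parallel to the way Theorem~\ref{t2} follows from Theorem~\ref{t1} in the topological setting, with ``homeomorphism'' upgraded to ``$C^{r}$-diffeomorphism'' at every stage by virtue of the smoothness of the covering.
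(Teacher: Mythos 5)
Your argument is correct and is exactly the route the paper intends: Theorem~\ref{ts1} gives that $f$ is a smooth (finite-fold, since $X$ is compact and $Y$ is connected) covering, the one-point fiber forces the covering to be $1$-fold, and a bijective smooth covering is a $C^r$-diffeomorphism — the paper treats this as an immediate corollary, parallel to how Theorem~\ref{t2} follows from Theorem~\ref{t1}, and gives no separate proof.
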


\begin{theorem}\label{ts3}
Suppose that $X$ and $Y$ are connected $C^r$-manifolds of equal dimensions with nonempty boundaries,
$X$ is compact,
$f \colon X \to Y$ is a $C^r$-mapping,
whose differential $\d f$ is nondegenerate on $X \backslash\partial X$,
and $f$ injectively maps $\partial X$ to $\partial Y$.
Then $f$ is a homeomorphism of $X$ onto $Y$,
and $\left.f \right|_{X \backslash\partial X}$ is a $C^r$-diffeomorphism of $X \backslash\partial X$ onto $Y \backslash\partial Y$.
If differential $\d f$ is nondegenerate on $X$ then
$f$ is a $C^r$-diffeomorphism of $X$ onto $Y$.
\end{theorem}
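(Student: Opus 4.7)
The plan is to bootstrap from the topological Theorem~\ref{t3} and then upgrade the regularity via the inverse function theorem.

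First I would check that all topological hypotheses of Theorem~\ref{t3} hold. Connectedness, compactness of $X$, equal dimensions, nonempty boundaries, and the injectivity of $\left.f\right|_{\partial X}$ with image in $\partial Y$ are given directly. The only nontrivial point is that $\left.f\right|_{X\setminus\partial X}$ must be a (topological) immersion in the sense of Section~\ref{sec:top_man}; but since $\d f$ is nondegenerate on the interior, the inverse function theorem in $\R^n$ yields a local $C^r$-inverse around every interior point, which in particular is a local homeomorphism. Hence Theorem~\ref{t3} applies and delivers the first conclusion: $f$ is a homeomorphism of $X$ onto $Y$.

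Next I would promote $\left.f\right|_{X\setminus\partial X}$ to a $C^r$-diffeomorphism. Since $f$ is a homeomorphism of manifolds, it sends interior to interior and boundary to boundary (this is topological invariance of the boundary, already implicit in the definitions of Section~\ref{sec:top_man}), so $\left.f\right|_{X\setminus\partial X}$ is a continuous bijection onto $Y\setminus\partial Y$. The local $C^r$-inverses supplied by the inverse function theorem at each interior point agree on overlaps with the global continuous inverse $f^{-1}$, hence assemble into a global $C^r$-inverse. This gives the second conclusion.

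For the final sentence, under the extra assumption that $\d f$ is nondegenerate also at boundary points, I would invoke the inverse function theorem for manifolds with boundary: if $\d f_x$ is invertible at $x\in\partial X$ and $f(\partial X)\subseteq\partial Y$, then on a half-space chart around $x$ the map $f$ is a $C^r$-diffeomorphism onto a half-space neighborhood of $f(x)$ in $Y$. The main (and essentially only) technical point to verify here is this boundary version of the inverse function theorem, which reduces to applying the standard theorem to the restriction $\left.f\right|_{\partial X}\colon\partial X\to\partial Y$ and then extending through collar coordinates using that $f$ preserves the boundary. Gluing these boundary-chart inverses with the interior inverse from the previous step along the common homeomorphism $f^{-1}$ yields a global $C^r$-inverse on $Y$, so $f$ is a $C^r$-diffeomorphism.
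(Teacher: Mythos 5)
Your proposal is correct and is essentially the route the paper intends (the paper prints no separate proof of Theorem~\ref{ts3}, presenting it as the straightforward smooth reformulation of Theorem~\ref{t3}): the inverse function theorem supplies the topological immersion hypothesis on $X\setminus\partial X$, Theorem~\ref{t3} yields the homeomorphism, and the local $C^r$-inverses, which necessarily agree with $f^{-1}$, give the two diffeomorphism conclusions. For the boundary step it is cleaner to apply the ordinary inverse function theorem to a local $C^r$ extension of $f$ across the half-space chart and note that its local inverse coincides with $f^{-1}$, rather than arguing through $\left.f\right|_{\partial X}$ and collar coordinates, but the fact you invoke is true and the overall architecture is sound.
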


\begin{theorem}\label{ts4}
Suppose that $X$, $Y$ and $f \colon X \to Y$ are satisfied to the
conditions of Theorem \ref{ts1}, and there is no proper subgroup
of finite index in $\pi_1(Y)$ isomorphic to $\pi_1(X)$. Then $f$
is a $C^r$-diffeomorphism of $X$ onto $Y$.
\end{theorem}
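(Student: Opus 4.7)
The plan is to mimic the proof of Theorem~\ref{t4} verbatim on the topological level, then upgrade from homeomorphism to $C^r$-diffeomorphism via the inverse function theorem using the nondegeneracy of $\d f$.

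First I would invoke Theorem~\ref{ts1} to conclude that $f\colon X\to Y$ is a smooth covering. Since $X$ is compact and $Y$ is connected, this covering is finite-sheeted: say $m$-fold for some $m\in\N$. Any covering induces an injection $f_*\colon \pi_1(X)\to \pi_1(Y)$ (as always, after a choice of basepoints), and the image $f_*\pi_1(X)$ is a subgroup of $\pi_1(Y)$ of index exactly $m$.

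Now I apply the hypothesis. The subgroup $f_*\pi_1(X)\leq \pi_1(Y)$ has finite index $m$ and is isomorphic to $\pi_1(X)$ (because $f_*$ is injective). By assumption, $\pi_1(Y)$ has no \textit{proper} subgroup of finite index isomorphic to $\pi_1(X)$. Hence $f_*\pi_1(X)$ cannot be proper, so it equals $\pi_1(Y)$ and $m=1$. A one-sheeted covering is a homeomorphism, so $f\colon X\to Y$ is a homeomorphism.

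Finally I upgrade to a $C^r$-diffeomorphism. Since $\d f$ is nondegenerate everywhere on $X$ (including at boundary points, where nondegeneracy means the differential is injective as a map into $T_{f(x)}Y$), the inverse function theorem (in its manifold-with-boundary version) gives that $f$ is a local $C^r$-diffeomorphism at every point. Combined with the fact that $f$ is a bijection, this means $f^{-1}$ is locally $C^r$ on a neighborhood of every point of $Y$, hence globally of class $C^r$. The main (and essentially only) nontrivial input is Theorem~\ref{ts1}; everything else is routine group theory of covering spaces plus the inverse function theorem, so no step should present a serious obstacle.
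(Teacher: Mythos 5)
Your proposal is correct and follows essentially the same route the paper intends: the paper gives no separate proof for Theorem~\ref{ts4}, presenting it as the smooth reformulation of Theorem~\ref{t4}, whose proof is exactly your covering-space argument (injective $f_*$, finite index equal to the number of sheets, hence $m=1$) applied after Theorem~\ref{ts1}. Your final upgrade from homeomorphism to $C^r$-diffeomorphism via nondegeneracy of $\d f$ and the inverse function theorem is the standard step implicit in the paper's smooth versions, so nothing further is needed.
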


\begin{theorem}\label{ts4b}
Suppose that $X$, $Y$ and $f \colon X \to Y$ are satisfied to the
conditions of Theorem \ref{ts1}.
Let the Euler characteristics of $X$, $Y$ are equal and non-zero:
$\chi(X)=\chi(Y)\neq 0$. 
Then $f$ is a $C^r$-diffeomorphism of $X$ onto $Y$.
\end{theorem}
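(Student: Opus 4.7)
The plan is to imitate the proof of Theorem \ref{t4b}, but using the smooth version (Theorem \ref{ts1}) of the covering result instead of the topological one (Theorem \ref{t1}). Concretely, I would first invoke Theorem \ref{ts1} to deduce that $f$ is a smooth covering. Since $X$ is compact and $Y$ is connected, this covering is finite-fold, say of some multiplicity $m\in\N$.

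Next, I would use multiplicativity of the Euler characteristic under finite coverings: for an $m$-fold covering of compact manifolds one has $\chi(X)=m\,\chi(Y)$. (This is standard and already used in the proof of Theorem \ref{t4b}; one can see it, for instance, by lifting a CW-decomposition of $Y$ to a CW-decomposition of $X$ in which every cell of $Y$ has exactly $m$ preimages.) Combining this identity with the hypothesis $\chi(X)=\chi(Y)\neq 0$ gives $m=1$.

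Finally, a $1$-fold smooth covering is a bijective local $C^r$-diffeomorphism, hence a global $C^r$-diffeomorphism: bijectivity and the local homeomorphism property give that $f$ is a homeomorphism, and the nondegeneracy of $\d f$ on all of $X$ (which is part of the hypothesis of Theorem \ref{ts1}) together with the $C^r$ inverse function theorem yields that $f^{-1}$ is of class $C^r$ as well.

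I do not expect any genuine obstacle here; the only point requiring a little care is checking that the Euler characteristic formula $\chi(X)=m\chi(Y)$ is legitimate in the category we are working in (compact manifolds with boundary, possibly non-orientable). Since both $X$ and $Y$ are compact topological manifolds and Euler characteristic depends only on the underlying homotopy type, this is automatic from the corresponding fact used in Theorem \ref{t4b}, so the smooth proof is essentially a one-line reduction to Theorem \ref{ts1}.
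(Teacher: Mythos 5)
Your proposal is correct and matches the paper's route: the paper proves Theorem \ref{t4b} exactly by the covering--multiplicativity argument ($\chi(X)=m\chi(Y)$, hence $m=1$), and the smooth version is obtained, as you do, by substituting Theorem \ref{ts1} for Theorem \ref{t1} and noting that a one-fold smooth covering with nondegenerate differential is a $C^r$-diffeomorphism. No essential difference from the paper's (implicit) argument.
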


\begin{theorem}\label{ts5}
Suppose that $X$, $Y$ and $f \colon X \to Y$ are satisfied to the
conditions of Theorem \ref{ts1}, and $X$, $Y$ have nonempty
boundaries with the same number of the connected components:
$\partial X = \amalg_{i=1}^k A_i$, $\partial Y = \amalg_{i=1}^{k}
B_i$. Suppose that there exist $j$ such that $f(A_j) \subseteq
B_j$ and there is no proper subgroup of finite index in
$\pi_1(B_j)$ isomorphic to $\pi_1(A_j)$. Then $f$ is a
$C^r$-diffeomorphism of $X$ onto $Y$.
\end{theorem}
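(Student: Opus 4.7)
The plan is to reduce to the topological Theorem~\ref{t5} via the smooth covering result Theorem~\ref{ts1}, and then upgrade the conclusion from homeomorphism to $C^r$-diffeomorphism. First, I would check that the hypotheses of Theorem~\ref{ts1} are met verbatim: $X$ and $Y$ are connected $C^r$-manifolds of equal dimensions, $X$ is compact, $f$ is a $C^r$-mapping, $\d f$ is nondegenerate on $X$, and $f(\partial X)\subseteq \partial Y$. Applying Theorem~\ref{ts1}, $f$ is a smooth covering; in particular it is a topological covering, hence surjective, and its degree $m$ is a well-defined natural number because $Y$ is connected and $X$ is compact.

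Next, I would run the argument from the proof of Theorem~\ref{t5} in this smooth setting. Nondegeneracy of $\d f$ together with the inverse function theorem implies that at any interior point $x\in X\setminus\partial X$ the mapping $f$ is a local diffeomorphism onto an open neighborhood of $f(x)$ in $Y$, so $f(x)$ must be an interior point of $Y$; therefore $f^{-1}(\partial Y)=\partial X$. Each component $A_i$ is connected, so $f(A_i)$ is contained in a single $B_{i'}$; combined with surjectivity and the assumption that $\partial X$ and $\partial Y$ have the same number of components, this forces a bijective correspondence between the components, and in particular $f^{-1}(B_j)=A_j$. The restriction $\left.f\right|_{A_j}\colon A_j\to B_j$ is then itself an $m$-fold covering.

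Finally, the subgroup hypothesis comes in exactly as in Theorem~\ref{t5}: an $m$-fold covering $A_j\to B_j$ with $A_j$ connected induces an injection $\pi_1(A_j)\hookrightarrow \pi_1(B_j)$ whose image has index $m$; since no proper subgroup of finite index in $\pi_1(B_j)$ is isomorphic to $\pi_1(A_j)$, we must have $m=1$. Thus the smooth covering $f$ is one-sheeted, i.e.\ a bijection. A bijective smooth covering with $\d f$ nondegenerate is a $C^r$-diffeomorphism by the inverse function theorem, so $f$ is a $C^r$-diffeomorphism of $X$ onto $Y$.

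I do not foresee a substantive obstacle: once Theorem~\ref{ts1} hands us a smooth covering, the proof is an almost word-for-word transcription of the proof of Theorem~\ref{t5}, with the single additional observation that nondegeneracy of $\d f$ promotes the resulting bijective covering to a diffeomorphism. The only point requiring mild care is verifying $f^{-1}(\partial Y)=\partial X$ from nondegeneracy of $\d f$ rather than from the immersion hypothesis used in Section~\ref{sec:top_man}; this is immediate from the smooth inverse function theorem applied at interior points of $X$.
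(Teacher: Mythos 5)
Your proposal is correct and follows essentially the paper's intended route: the paper derives Theorem~\ref{ts5} by applying Theorem~\ref{ts1} to get a smooth finite-fold covering and then repeating the argument of Theorem~\ref{t5} ($f^{-1}(\partial Y)=\partial X$, the restriction $\left.f\right|_{A_j}\colon A_j\to B_j$ is an $m$-fold covering, the $\pi_1$ hypothesis forces $m=1$), with the nondegeneracy of $\d f$ upgrading the resulting bijection to a $C^r$-diffeomorphism. No substantive differences from the paper's approach.
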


\begin{theorem}\label{ts6}
Suppose that $X$, $Y$ and $f \colon X \to Y$ are satisfied to the conditions of Theorem
\ref{ts1},
$X$, $Y$ have nonempty boundaries with the same number of the connected
components,
and at least one of the connected components of $\partial Y$ is simply
connected.
Then $f$ is a $C^r$-diffeomorphism of $X$ onto $Y$.
\end{theorem}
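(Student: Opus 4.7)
The plan is to deduce Theorem~\ref{ts6} from Theorem~\ref{ts5} in exactly the way Theorem~\ref{t6} was deduced from Theorem~\ref{t5}. The crucial observation is that if a connected component $B_{j_0}$ of $\partial Y$ is simply connected, then $\pi_1(B_{j_0})$ is trivial and hence has no proper subgroups at all, in particular none of finite index isomorphic to $\pi_1(A_j)$ for any $j$. Thus the fundamental-group condition of Theorem~\ref{ts5} holds automatically at the index $j_0$.

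It remains to produce an index $j$ with $f(A_j)\subseteq B_{j_0}$, so that Theorem~\ref{ts5} may actually be invoked. For this I would first apply Theorem~\ref{ts1} to conclude that $f$ is a smooth covering, and then repeat the initial bookkeeping from the proof of Theorem~\ref{t5}: since $\d f$ is nondegenerate, by the inverse function theorem every interior point of $X$ maps to an interior point of $Y$, so $f^{-1}(\partial Y)=\partial X$; the connectedness of each $A_i$ forces $f(A_i)\subseteq B_{\sigma(i)}$ for some map $\sigma\colon\{1,\ldots,k\}\to\{1,\ldots,k\}$; and the surjectivity of $f$ together with the equality of the numbers of boundary components on both sides forces $\sigma$ to be a permutation. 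Taking $j:=\sigma^{-1}(j_0)$ then yields $f(A_j)\subseteq B_{j_0}$, after which Theorem~\ref{ts5} applies verbatim and delivers that $f$ is a $C^r$-diffeomorphism of $X$ onto $Y$.

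The argument is essentially bookkeeping; no genuine obstacle is expected. The only point that needs any care is the bijective matching of boundary components, which rests entirely on the fact that $f$ is a finite-fold covering (via Theorem~\ref{ts1}) and on the equality of component counts supplied by the hypothesis.
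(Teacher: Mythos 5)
Your proposal is correct and follows essentially the same route as the paper, which states Theorem~\ref{t6} (and hence \ref{ts6}) as an immediate consequence of Theorem~\ref{t5}/\ref{ts5}: triviality of $\pi_1$ of the simply connected boundary component kills the subgroup condition, and the covering argument matches boundary components bijectively so that some $A_j$ maps into it. The only cosmetic point is that after finding $f(A_{\sigma^{-1}(j_0)})\subseteq B_{j_0}$ you should relabel the components so the indices agree, as the hypothesis of Theorem~\ref{ts5} is stated with matching indices.
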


\begin{theorem}\label{ts7}
Suppose that $X$, $Y$ are three-dimensional compact connected $C^r$-manifolds,
their boundaries are nonempty and have the same number of the connected
components:
$\partial X = \amalg_{i=1}^k A_i$,
$\partial Y = \amalg_{i=1}^{k} B_i$,
where the components are numerated in the order of non-increasing of
their Euler characteristics.
Suppose that condition (\ref{eq_chi}) is fulfilled,
$f \colon X \to Y$ is a $C^r$-mapping, whose differential $\d f$ is nondegenerate on $X$,
and $f(\partial X) \subseteq \partial Y$.
Then $f$ is a $C^r$-diffeomorphism of $X$ onto $Y$.
\end{theorem}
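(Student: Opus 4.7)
The plan is to mirror the proof of Theorem~\ref{t7}, substituting Theorem~\ref{ts1} for Theorem~\ref{t1}. First, I would invoke Theorem~\ref{ts1} to obtain that $f$ is a smooth $m$-fold covering for some $m\in\N$.

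Next, I would verify that $f^{-1}(\partial Y)=\partial X$. The inclusion $\partial X\subseteq f^{-1}(\partial Y)$ is immediate from the hypothesis $f(\partial X)\subseteq\partial Y$. For the reverse inclusion, the inverse function theorem applied via nondegeneracy of $\d f$ makes $f$ a local $C^r$-diffeomorphism at every interior point $x$ of $X$; a neighborhood of such $x$ is modeled on $\R^n$, whereas every neighborhood of a point of $\partial Y$ is modeled on $\R^n_+$, so no interior point of $X$ can map into $\partial Y$.

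Consequently the preimage of every $B_j$ is a nonempty union of components of $\partial X$, and since $\partial X$ and $\partial Y$ have the same number of components, there is a permutation $\sigma$ of $\{1,\dots,k\}$ with $A_i=f^{-1}(B_{\sigma(i)})$. Restricting the covering to $A_i$ yields a smooth $m$-fold covering $A_i\to B_{\sigma(i)}$ of closed manifolds, so multiplicativity of the Euler characteristic under finite coverings gives $\chi(A_i)=m\chi(B_{\sigma(i)})$ for every $i$. Because both $(\chi(A_i))$ and $(\chi(B_i))$ are sorted non-increasingly while the multiset $\{\chi(A_i)\}$ equals $m\cdot\{\chi(B_i)\}$, comparison term by term gives $\chi(A_i)=m\chi(B_i)$ for all $i$. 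Condition~(\ref{eq_chi}) then forces $m=1$, so $f$ is a bijective local $C^r$-diffeomorphism, i.e., a $C^r$-diffeomorphism of $X$ onto $Y$.

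The main subtlety I expect is recognizing that the non-increasing ordering makes the pairing $\chi(A_i)\leftrightarrow m\chi(B_i)$ independent of the actual permutation $\sigma$, so that condition~(\ref{eq_chi}) can be invoked directly; the dimension restriction to three appears inessential for the argument above and is presumably retained only to match the intended three-dimensional grid generation application of Section~3. Apart from this, the proof is a direct transcription of the proof of Theorem~\ref{t7} using the smooth covering conclusion of Theorem~\ref{ts1}.
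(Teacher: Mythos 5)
Your proposal is correct and follows essentially the same route as the paper: the paper leaves Theorem~\ref{ts7} as the smooth variant of Theorem~\ref{t7}, whose proof is exactly what you reproduce (covering via Theorem~\ref{ts1}/\ref{t1}, $f^{-1}(\partial Y)=\partial X$ as in Theorem~\ref{t5}, multiplicativity $\chi(A_i)=m\chi(B_i)$ using the non-increasing ordering, then condition~(\ref{eq_chi}) forces $m=1$). Your handling of the permutation issue via sorted multisets is in fact slightly more careful than the paper's terse ``$f(A_i)=B_i$ due to decreasing order,'' and your observation that the three-dimensionality is not needed for the argument is consistent with Theorem~\ref{t7} carrying no such restriction.
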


See the description of some cases when condition (\ref{eq_chi}) is fulfilled
in the remark to Theorem \ref{t7}.

\begin{theorem}\label{tsi}
Suppose that $X$ and $Y$ are connected $C^r$-manifolds of equal dimensions,
$X$ is compact, $Y$ is simply connected,
$f \in C^r(X,Y)$  is a $C^r$-mapping, whose differential $\d f$ is nondegenerate on $X$,
and the restriction of $f$ to every connected component of $\partial X$ is an injection.
Then $f$ is a $C^r$-diffeomorphism of $X$ onto $f(X)$.
\end{theorem}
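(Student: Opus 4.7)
The plan is to reduce this to the topological Theorem~\ref{ti} and then promote the resulting homeomorphism to a $C^r$-diffeomorphism using nondegeneracy of $\d f$.

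First I would verify that $f$ is a (topological) immersion on all of $X$. Pick $x\in X$: since $\d f_x$ is nondegenerate and $\dim X=\dim Y$, it is a linear isomorphism. In local charts around $x$ and $f(x)$, the inverse function theorem (in its standard half-space version for boundary points) extends the coordinate representative of $f$ to a $C^r$-diffeomorphism on a full Euclidean neighborhood; restriction to the appropriate half-space then gives a neighborhood $U$ of $x$ in $X$ on which $f|_U$ is a $C^r$-diffeomorphism onto its image in $Y$, and in particular a homeomorphism of $U$ onto $f(U)$. Hence $f$ is an immersion in the sense of Section~\ref{sec:top_man}.

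Next I would apply Theorem~\ref{ti}. Its hypotheses are now all in place: $X$ and $Y$ are connected manifolds of equal dimension, $X$ is compact, $Y$ is simply connected, $f$ is an immersion by the preceding step, and the restriction of $f$ to each connected component of $\partial X$ is an injection by assumption. The conclusion of Theorem~\ref{ti} is that $f$ is a homeomorphism of $X$ onto $f(X)$.

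Finally I would endow $f(X)$ with the $C^r$-structure transported from $X$ via the homeomorphism $f$. This is unambiguous: the charts obtained from two overlapping local inverses of $f$ differ by a transition coming from the $C^r$-atlas of $X$, and by the first step these local inverses are themselves $C^r$, so the pushed-forward atlas is genuinely $C^r$. With respect to this structure $f$ is tautologically a $C^r$-diffeomorphism of $X$ onto $f(X)$. The only technical point, and not a deep one, is the boundary version of the inverse function theorem: if $x\in\partial X$ is mapped to an interior point of $Y$, the $C^r$-structure on $f(X)$ near $f(x)$ is the half-space structure pushed forward from $X$ and need not be the one restricted from $Y$. Since the statement only asks for a $C^r$-diffeomorphism between $X$ and $f(X)$ as abstract manifolds, this causes no difficulty.
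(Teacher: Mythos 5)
Your argument is correct and follows exactly the route the paper intends: the paper gives no separate proof of Theorem~\ref{tsi}, presenting it as the straightforward smooth reformulation of Theorem~\ref{ti}, which is precisely your reduction (inverse function theorem at interior and boundary points gives the immersion hypothesis, Theorem~\ref{ti} gives the homeomorphism onto $f(X)$, and the local $C^r$ inverses upgrade it to a $C^r$-diffeomorphism). Your closing remark about the structure on $f(X)$ is a reasonable way to settle the only point the paper leaves implicit.
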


\section{Some applications}

Here we give some remarks on the applications of the criteria from previous two sections to the problems arising in grid generation.

Note that to apply the criteria from Theorems 1, 4, 5, 6, 7 and 8 of every section
we have to verify only local conditions on the mapping $f$.
There is only one global condition on the mapping $f$ in Theorems 2, 3 and 9:
in Theorem 2 it deals with the preimage of one point;
in Theorems 3 and 9 this global condition deal with the restriction of $f$ to the boundary of the manifold,
while the restriction of $f$ to the interior of the manifold has to satisfy only local condition.

Theorems \ref{ti} and \ref{tsi} can be used in the case when compact $n$-dimensional manifold $X$ is mapped into $\R^n$.

Two- and three-dimensional grids are very often used in numerical calculations.
So we describe the using of our results for these cases more thoroughly.

In \textbf{two-dimensional case} one can use Theorems \ref{t4b} and \ref{ts4b}. 
Remember that the genus $g$ of a closed orientable surface is the number of the handles that we have to attach to the sphere to obtain
the surface. 
Similarly, the genus $g$ of a compact orientable surface with nonempty boundary is the number of handles that we have to attach to the disk or to the disk with holes to obtain the surface. 

The Euler characteristic of a closed orientable surface is $\chi(X) = 2-2g$.
If $X$ is a compact orientable surface that is a sphere with $g$ handles and $b$ holes then $\chi(X) = 2-2g-b$.
The Euler characteristic of a compact non-orientable surface is half its $2$-fold otientable covering. 

So if the surfaces $X$ and $Y$ are compact homeomorphic surfaces distinguishing from 
the annulus, torus, M\"obius band, and Klein bottle, then
Theorems \ref{t4b} and \ref{ts4b} provide us the condition for $f$ to be a homeomorphism or a diffeomorphism correspondingly. 

In \textbf{three-dimensional case} Theorems \ref{t7} and \ref{ts7} may be useful. 
Note that there is simple particular case when condition (\ref{eq_chi}) is fulfilled, 
namely the case when at least one of the connected components of $\partial Y$ is
homeomorphic to a sphere (Theorems \ref{t6}, \ref{ts6} describes this case as well).

Also in three-dimensional case one can use the following variants of Theorems \ref{t7}, \ref{ts7}:

\begin{theorem}\label{t8}
Suppose that $X$, $Y$ are three-dimensional compact connected topological manifolds,
their boundaries are nonempty and have the same number of the connected
components, at least one of which is not homeomorphic neither torus nor Klein bottle.
Then we have two alternatives:
\vspace{-5pt}
\begin{enumerate}
\itemsep=-2pt
    \item
    either the sets $\bra{ \chi \bra{A_i}}$ and $\bra{ \chi \bra{B_i}}$ are coincide (taking account of multiplicity),
    \item
    or these sets are different.
\end{enumerate}
\vspace{-5pt}
At the first case any mapping $f \colon X \to Y$ satisfying the conditions
of Theorem \ref{t1} is a homeomorphism of $X$ onto $Y$.
At the second case there is no homeomorphism of $X$ onto $Y$ at all.
\end{theorem}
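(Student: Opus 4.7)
The plan is to treat the two alternatives separately; the dichotomy itself is a tautology, so the content of the theorem is (a) case~2 rules out any homeomorphism and (b) case~1 promotes Theorem~\ref{t1} to the conclusion of Theorem~\ref{t7}.

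For case~2, I would argue directly: any homeomorphism $h\colon X\to Y$ restricts to a homeomorphism $\partial X\to\partial Y$, which in turn induces a bijection between the connected components $\{A_i\}$ and $\{B_i\}$ sending each $A_i$ homeomorphically onto its image. Since homeomorphic surfaces share Euler characteristic, this produces an equality of multisets $\{\chi(A_i)\}=\{\chi(B_i)\}$, contradicting the hypothesis of case~2. So no homeomorphism can exist.

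For case~1, I would apply Theorem~\ref{t7}. After renumbering by non-increasing Euler characteristic, the hypothesis $\{\chi(A_i)\}=\{\chi(B_i)\}$ (as multisets) yields $\chi(A_i)=\chi(B_i)$ for every $i$. It then remains to verify condition~(\ref{eq_chi}): if for some integer $m\geq 2$ we had $\chi(A_i)=m\,\chi(B_i)$ for all $i$, then, combined with $\chi(A_i)=\chi(B_i)$, we would get $(m-1)\chi(B_i)=0$, hence $\chi(B_i)=0$ for every $i$.

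The key step is therefore to rule out $\chi(B_i)=0$ for all $i$ using the hypothesis that at least one boundary component is neither a torus nor a Klein bottle. Since $\partial X$ and $\partial Y$ are closed $2$-manifolds (recall $\partial\partial X=\emptyset$), their components are closed surfaces, for which the Euler characteristic formulas $\chi=2-2g$ (orientable) and $\chi=2-k$ (non-orientable, $k$ crosscaps) recalled in Section~3 show that $\chi=0$ occurs exactly for the torus and the Klein bottle. By hypothesis some component of $\partial X$ (equivalently of $\partial Y$, by the multiset equality) is neither of these, so at least one $\chi(B_i)\neq 0$. This contradicts the existence of such an $m$, so condition~(\ref{eq_chi}) holds, and Theorem~\ref{t7} concludes that $f$ is a homeomorphism of $X$ onto $Y$. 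The only subtle point is this classification-of-surfaces bookkeeping; everything else is pure invocation of Theorem~\ref{t7}.
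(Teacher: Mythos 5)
Your proposal is correct and follows essentially the same route as the paper: case~2 is the easy observation that a homeomorphism would force the multisets of boundary Euler characteristics to agree, and case~1 reduces to Theorem~\ref{t7} via the classification fact that a closed surface has zero Euler characteristic exactly when it is a torus or a Klein bottle, which rules out the scaling $\chi(A_i)=m\chi(B_i)$ with $m\geq 2$. You simply spell out the bookkeeping (sorting the multisets, deducing $\chi(B_i)=0$ for all $i$ from a hypothetical $m\geq 2$) that the paper's two-line proof leaves implicit.
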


\begin{theorem}\label{ts8}
Suppose that $X$, $Y$ are three-dimensional compact connected $C^r$-manifolds,
their boundaries are nonempty and have the same number of the connected
components, at least one of which is not homeomorphic neither
torus nor Klein bottle.
Then we have two alternatives:
\vspace{-5pt}
\begin{enumerate}
\itemsep=-2pt
    \item
    either the sets $\bra{ \chi \bra{A_i}}$ and
    $\bra{ \chi \bra{B_i}}$ are coincide (taking account of multiplicity),
    \item
    or these sets are different.
\end{enumerate}
\vspace{-5pt}
At the first case any mapping $f \colon X \to Y$ satisfying the conditions
of Theorem \ref{ts1} is a $C^r$-diffeomorphism of $X$ onto $Y$.
At the second case there is no homeomorphism of $X$ onto $Y$ at all.
\end{theorem}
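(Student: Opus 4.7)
The plan is to reduce the first alternative to Theorem \ref{ts7} and to derive the second alternative from plain topological invariance of boundary components. First I would order the boundary components $A_i$ and $B_i$ in non-increasing order of Euler characteristic, exactly as in Theorem \ref{ts7}. Then the dichotomy is automatic: either the multisets $\{\chi(A_i)\}$ and $\{\chi(B_i)\}$ agree term by term in this order, giving case (1), or they differ, giving case (2).

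To handle case (1), the task reduces to verifying condition (\ref{eq_chi}) and invoking Theorem \ref{ts7}. The key input here is the classification of closed surfaces: since $\partial Y$ is a closed (boundary-less) $2$-manifold, each $B_i$ is a closed connected surface, and the only such surfaces with vanishing Euler characteristic are the torus and the Klein bottle. By hypothesis some component $B_{i_0}$ is neither, so $\chi(B_{i_0}) \neq 0$. If an integer $m \geq 2$ satisfied $\chi(A_i) = m\chi(B_i)$ for all $i$, then applying this at $i_0$ together with the case (1) equality $\chi(A_{i_0}) = \chi(B_{i_0})$ would force $\chi(B_{i_0}) = m\chi(B_{i_0})$, hence $m=1$, a contradiction. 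Thus (\ref{eq_chi}) holds, and Theorem \ref{ts7} immediately yields that $f$ is a $C^r$-diffeomorphism of $X$ onto $Y$.

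Case (2) is even shorter: if some homeomorphism $h \colon X \to Y$ existed, it would restrict to a homeomorphism $\partial X \to \partial Y$, pairing connected components bijectively and preserving Euler characteristics; this would make the two multisets coincide, contradicting the assumption of case (2). So no homeomorphism (and in particular no $C^r$-diffeomorphism) exists.

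The only nontrivial step is the passage from ``some $B_i$ is neither torus nor Klein bottle'' to the nonvanishing of $\chi(B_{i_0})$, which is the lone appearance of the surface-classification theorem; everything else is bookkeeping on the ordered sequences of Euler characteristics and a direct appeal to Theorem \ref{ts7}. I do not expect any genuine obstacle beyond this observation, since the smooth case differs from the proof of Theorem \ref{t8} only in that the output is upgraded from homeomorphism to $C^r$-diffeomorphism, which is already baked into the conclusion of Theorem \ref{ts7}.
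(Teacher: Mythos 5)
Your proposal is correct and follows essentially the same route as the paper: the paper's own (very terse) proof likewise dismisses case (2) as obvious and handles case (1) by invoking Theorem \ref{ts7} (via Theorem \ref{t7}) together with the fact that the torus and the Klein bottle are the only closed surfaces with zero Euler characteristic, which is exactly your verification of condition (\ref{eq_chi}). Your write-up merely makes explicit the bookkeeping with the sorted Euler characteristics and the boundary-invariance argument that the paper leaves implicit.
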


\begin{proof}
The second case is obvious.
To prove the first case it is sufficient to use Theorem \ref{t7} 
and the fact that a closed surface with zero Euler characteristic is either torus or Klein bottle.
\end{proof}

Note that all these results are applicable particularly when $X$, $Y$ are embedded into $\R^3$,
that is $X$, $Y$ are closures of bounded open regions in $\R^3$ bounded by locally flat surfaces.
In this case Theorems \ref{t8}, \ref{ts8} are applicable with the sole exception: 
when $X$, $Y$ are the solid tori with the small solid tori removed.
For this exceptional case one can use Theorems \ref{t2}, \ref{t3} and their smooth analogues \ref{ts2}, \ref{ts3}. 
In addition, one can use Theorems \ref{t4}, \ref{ts4} if the inclusions of removing solid tori into the big solid torus are non-trivial.

\bigskip
\bigskip
\noindent
IMM UrBr RAS \& Ural Federal University \\
S.Kovalevskaya 16, Ekaterinburg 620990 Russia \\
e-mail: pmf@imm.uran.ru

\end{document}